\newtheorem{thm}{Theorem}[section]
\newtheorem{lem}[thm]{Lemma}
\newtheorem{cor}[thm]{Corollary}
\theoremstyle{definition}
\newtheorem{defn}[thm]{Definition}
\theoremstyle{remark}
\newtheorem{remark}[thm]{Remark}
\begin{document}
\title{Collaborative Network Formation in Spatial Oligopolies}
\author{Shaun Lichter, Terry Friesz, and Christopher Griffin}
\maketitle
\begin{abstract}
Recently, it has been shown that networks with an arbitrary degree sequence may be a stable solution to a network formation game.  Further, in recent years there has been a rise in the number of firms participating in collaborative efforts.  In this paper, we show conditions under which a graph with an arbitrary degree sequence is admitted as a stable firm collaboration graph.
\end{abstract}
\section{Introduction}
Recently there has been a rise in the number of firms participating in collaborative efforts.  Goyal and Joshi \cite{goyal2003} present a model of firm collaboration in aspatial oligopolies and in this paper we extend this model to spatial oligopolies.  We investigate the impact of the spatial economy on the collaboration network and the impact of the collaboration network on the spatial economy.
Since the 1960s the number of firm's participating in collaborative agreements has increased significantly \cite{hagedoorn1990,hagedoorn1993,hagedoorn1994,hagedoorn1996,hagedoorn2000,hagedoorn2002}.  This collaboration takes various forms, one of which is research and development (R \& D) that often consists of sharing resources such as equipment, laboratory space, office space, as well as engineers and scientists through separate R \& D subcompanies.  This collaboration has become very popular within industries that are R \& D intensive.  Hagedoorn shows that the number of collaborations has increased since the 1960s and rapidly increased in the 1980s \cite{hagedoorn2002}.  Firms in R \& D intensive industries are enabled to flexibly ally themselves to further their business.  Nonetheless, the existence of these collaborations is counterintuitive because firms should not want to share R \& D results or expenditures because it is the foundation of their future products.  As a result of this contradiction, this collaboration has spurred a host of literature \cite{hagedoorn1990,hagedoorn1993,hagedoorn1994,hagedoorn1996,hagedoorn2000,hagedoorn2002}.
Goyal and Joshi present a model of horizontal firm collaboration in oligopolies, where firms compete in the market after choosing collaborators \cite{goyal2003}.  The motivation behind this model is an examination of the incentives for collaboration and the interaction of these incentives with market competition.  Firms are able to lower production costs by committing some resources to a pair-wise collaboration effort.  A particular collaboration network is formed as a result of the collection of pairwise collaborations.  For each collaboration network, each firm has a particular production cost which effects the market competition that occurs over this collaboration network.  Hence, the oligopoly induces an allocation of value over the set of firms for a given collaboration network.
\section{Collaboration Networks and Collaborative Oligopolies}
In this section we present an introduction to collaboration networks and collaborative oligopolies.  We modify the notational conventions from the common notation in this this body of literature \cite{jackson1996,dutta1997,jackson2003} in order to better accomodate the spatial variables needed in later sections of this paper.  Let $N=\{1,2, \ldots n\}$ be the set of nodes in a graph, which will represent players or a group of players. The set of links in the graph is a set of pairs of nodes (subsets of $N$ of size two).  A graph $g$ is a set of links (set of subsets of $N$ of size two) and $g^{N}$ is the complete set of all links.  The set $G$ is the set of all graphs over the nodes $N$, that is, $G=\{g : g \subset g^{N} \}$.
The value of a graph $g$ is the total value produced by agents in the graph; we denote the value of a graph as the function $h:G \rightarrow \mathbb{R}$ and the set of of all such value functions as $H$. An allocation rule $Y:H \times G \rightarrow \mathbb{R}^{N}$ distributes the value $h(g)$ among the agents in $g$.  Denote the value allocated to agent $i$ as $Y_{i}(h,g)$.  Since, the allocation rule must distribute the value of the network to all players, it must be \textit{balanced}; i.e., $\sum_{i} Y_{i}(h,g)=h(g)$ for all $(h,g) \in H \times G$.  The allocation rule governs how the value is distributed and thus makes a significant contribution to the model.
Jackson and Wolinksy use \textit{pairwise stability} to model stable networks without the use of noncooperative games \cite{jackson1996}.
\begin{defn}
\label{definition: stability}
A network $g$ with value function $h$ and allocation rule $Y$ is pairwise \textit{stable} if (and only if):
\begin{enumerate*}
\item for all $ij \in g$, $Y_{i}(h,g) \geq Y_{i}(h,g - ij)$ and
\item for all $ij \not\in g$, if $Y_{i}(h,g+ij) > Y_{i}(h,g)$, then $Y_{j}(h,g+ij) < Y_{j}(h,g)$
\end{enumerate*}
\end{defn}
Pairwise stability implies that in a stable network, for each link that exists, (1) both players must benefit from it and (2) if a link can provide benefit to both players, then it in fact must exist.  Jackson notes that pairwise stability may be too weak because it does not allow groups of players to add or delete links, only pairs of players \cite{jackson2003}.  Deletion of multiple links simultaneously has been considered in \cite{belleflamme2004}.
We present an application of the network formation game to firm collaboration in spatial oligopolies, which is an extension to the firm collaboration presented by Goyal and Joshi in \cite{goyal2003}.

\subsection{General Collaborative Oligopoly Model}
Consider $n$ firms that compete in an oligopoly who may collaborate with any of the other $n-1$ firms. Firm $i$ produces a quantity $q_i$.  Denote $\mathbf{q}=(q_{1},q_{2},\ldots q_{n})$ as the vector of quantity production across all firms and $\mathbf{q}_{-i}=(q_{1},\ldots , q_{i-1},q_{i+1},\ldots q_{n})$ as the vector containing production quantities for all firms, but firm $i$.  Collaboration among firms affects the marginal cost of production. Thus a particular (collaboration) graph $g$ induces a marginal cost of firm $i$ under collaboration graph $g$ of $c_{i}(q_{i}|g)$.


We consider marginal cost functions of the form (\ref{eq: marginal cost 1}) where the marginal cost $c_{i}(q_{i}|g)$ for firm $i$ is a function of $q_{i}$, the quantity produced by firm $i$, and $\eta_{i}(g)$, the degree of firm $i$ in graph $g$.
\begin{equation}
\label{eq: marginal cost 1}
c_{i}(q_{i}|g)=f_{i}(q_{i},\eta_{i}(g))
\end{equation}
Here, $f_{i} \in C^{1}(\Omega_{i})$ where $q_{i} \in \Omega_{i}$ and $\Omega_{i}$ is defined as the feasible region for firm $i$.
\begin{displaymath}
\Omega_{i}= \left\{q_{i}:0 \leq q_i \right\}
\end{displaymath}

Given a network $g$, there is an induced set of costs which, along with the demand functions, produces a set of profit functions for each firm, $Y_{i}(g)$ (the allocation of payoff for player $i$).  These profit functions then induce a Nash equilibrium of production, which provides the precise allocation rule (i.e., profit) for each firm on the graph.  The stability of the collaboration network can then be analyzed using the definition of stability \ref{definition: stability}.

Denote the market marginal price function as $P(q_{1},q_{2},\ldots q_{n})$.  In this paper, we consider a market marginal price function (dependent on quantity produced) given by
\begin{equation}\label{eq: market demand}
P(q_{1},q_{2},\ldots q_{n}) = \alpha - \sum_{i \in N} q_{i}
\end{equation}
This can also be denoted as $P(Q)=\alpha-Q$ where $Q = \sum_{i \in N} q_i$
The profit for Player $i$ is:
\begin{equation}
Y_i(q_{i}|\mathbf{q}_{-i},g) =
\left(\alpha - \sum_{i \in N} q_{i}\right) q_i -
c(q_{i}|g)q_i
\label{eqn:Profit1}
\end{equation}
Given collaboration graph $g$, firm $i$ will solve the problem
\begin{align}
\max \;\; &Y_{i}(q_{i}|\mathbf{q}^{\ast}_{-i},g) \nonumber \\
\text{s.t.} \;\;& q_{i} \in \Omega_i
\end{align}
where $\mathbf{q}^{\ast}_{-i}$ is composed of the optimal production quantities for all firms, but $i$.  The gradient of the objective for firm $i$:
\begin{displaymath}
\nabla_{q_{i}} Y_{i}(q_{i}|\mathbf{q}^{\ast}_{-i},g)=
P(Q)-f_{i}(q_{i},\eta_{i}(g))-q_{i}-q_{i}\frac{\partial f_{i}}{\partial q_{i}} \\
\end{displaymath}
Each firm $i$ will solve an equivalent variational inequality by finding $q^{\ast}_{i} \in \Omega_{i}$ such that:
\begin{equation}
\langle  \nabla_{q_{i}} Y_{i}(q_{i}|\mathbf{q}^{\ast}_{-i},g),q_{i}-q^{\ast}_{i} \rangle \geq 0
\end{equation}
where $\langle \cdot , \cdot \rangle$ denotes a dot product.  In this case:
\begin{equation}
\langle P(Q)-f_{i}(q_{i},\eta_{i}(g))-q_{i}-q_{i}\frac{\partial f_{i}}{\partial q_{i}} ,q_{i}-q^{\ast}_{i} \rangle \geq 0
\end{equation}

The equilibrium for this oligopoly can be found by solving the variational inequality defined as finding $\mathbf{q}^{\ast} \in \Omega$ such that
\begin{equation}
\langle  \nabla_{\mathbf{q}} Y(\mathbf{q}|\mathbf{q}^{\ast},g),\mathbf{q}-\mathbf{q}^{\ast} \rangle \geq 0
\end{equation}
where
\begin{equation}
[\nabla_{\mathbf{q}} Y(\mathbf{q}|\mathbf{q}^{\ast},g)]_{i}=P(Q)-f_{i}(q_{i},\eta_{i}(g))-q_{i}-q_{i}\frac{\partial f_{i}}{\partial q_{i}}
\end{equation}
It is difficult to analytically determine which collaboration graphs will be stable because the oligopoly equilibriums are solutions to a variational inequality.  One could empirically find stable graphs, but instead we seek to find subcases of the model for which we can find analytical results.

\subsection{Previous Results on Network Stability in Aspatial Oligopoly}
In Goyal and Joshi \cite{goyal2003}, it is assumed that the marginal cost of firm $i$ linearly decreases with the number of collaborators for firm $i$:
\begin{equation}\label{eq: marginal cost}
c_{i}(g)=\gamma_{0}-\gamma \eta_{i}(g)
\end{equation}
where, as before, $\eta_{i}(g)$ is the number of links for firm $i$ and $\gamma_{0}$ is the marginal cost of production when a firm has no links.  Notice that $\gamma_{0}$ is constant for all firms.
One example that Goyal and Joshi \cite{goyal2003} study is that of a homogenous product oligopoly.  With the market marginal price function (\ref{eq: market demand}) and marginal cost (\ref{eq: marginal cost}), the resulting profit to Player $i$ is:
\begin{equation}
Y_i(g) =
\left(\alpha - \sum_{i \in N} q_{i}\right) q_i -
\left(\gamma_{0}-\gamma \eta_{i}(g)\right)q_i =
(\alpha - \gamma_0)q_i + \left(\sum_{i \in N} q_{i}\right)q_i -
\left(-\gamma \eta_{i}(g)\right)q_i
\label{eqn:Profit1}
\end{equation}
Goyal and Joshi show that with marginal cost (\ref{eq: marginal cost}) and market demand (\ref{eq: market demand}), the complete network is the unique stable network \cite{goyal2003}.
\subsection{Results of Nonlinear Cost on Stability}
In this section we review the results from \cite{LichterGriffinFriesz2011}, where we show the effect a nonlinear variation on the marginal cost function has on the stability of collaboration structures. In particular, we show that with cost functions of a particular form, the collaborative oligopoly will result in a stable collaboration graph with an arbitrary degree sequence.  We consider a marginal cost function:
\begin{equation}
c_i(g) = \gamma_0 + f_i(\eta_i(g))
\label{eqn:MarginalCostf}
\end{equation}
where $f_{i}$ is some function $f_{i}:\mathbb{R} \rightarrow \mathbb{R}$.

\begin{lem}
\label{lem: qi}
Suppose we have an oligopoly consisting of $n$ firms in which collaboration is defined by the graph $g$ and the profit function (allocation rule) for Firm $i$ in that oligopoly is given by:
\begin{equation}
Y_{i}(g) = (\alpha - \gamma_0)q_{i}(g) - \left(\sum_{j \in N} q_{j}\right)q_i(g) - f_{i}(\eta_{i}(g))q_i(g)
\label{eqn: profit oligopoly}
\end{equation}
then the quantity produced for firm $i$ is:
\begin{equation}
\label{eq: quantity produced app1}
q_{i}(g) = \frac{\alpha - \gamma_{0}-n f_{i}(\eta_{i}(g))+ \sum_{j \neq i}f_{j}(\eta_{j}(g)) }{n+1}
\end{equation}
\end{lem}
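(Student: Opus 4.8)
The plan is to compute the Cournot--Nash equilibrium production directly from first-order conditions. Since the feasible region is $\Omega_i = \{q_i : q_i \geq 0\}$ and I will look for an interior equilibrium in which every firm produces a positive quantity, the variational inequality that characterizes the equilibrium collapses to the stationarity condition $\nabla_{q_i} Y_i = 0$ for each firm $i$. First I would expand the profit (\ref{eqn: profit oligopoly}), observing that the aggregate term $\sum_{j \in N} q_j$ contains $q_i$ itself, so that $Y_i$ is quadratic in $q_i$ with leading term $-q_i^2$; this immediately gives strict concavity in $q_i$, so the stationary point I find is the unique maximizer and the second-order condition is automatically satisfied. Differentiating with respect to $q_i$ while holding $\mathbf{q}_{-i}$ fixed yields $(\alpha - \gamma_0) - 2q_i - \sum_{j \neq i} q_j - f_i(\eta_i(g)) = 0$, which I would rewrite compactly as $q_i + Q = \alpha - \gamma_0 - f_i(\eta_i(g))$, where $Q = \sum_{j} q_j$.

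The key step is an aggregation trick: rather than inverting the coupled $n \times n$ linear system one equation at a time, I would sum the $n$ first-order conditions over all firms. Summing the left-hand sides gives $Q + nQ = (n+1)Q$, so the system reduces to the single scalar identity $(n+1)Q = n(\alpha - \gamma_0) - \sum_{j} f_j(\eta_j(g))$, which solves for the aggregate quantity $Q$ in closed form. I would then back-substitute this value of $Q$ into the individual condition $q_i = \alpha - \gamma_0 - f_i(\eta_i(g)) - Q$, place everything over the common denominator $n+1$, and split $\sum_j f_j = f_i(\eta_i(g)) + \sum_{j \neq i} f_j(\eta_j(g))$ to collect the coefficient of $f_i(\eta_i(g))$ as $-n$. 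This produces exactly the claimed expression (\ref{eq: quantity produced app1}).

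Honestly, this is a routine linear computation, so there is no deep obstacle; the only genuine subtlety worth flagging is that the market price couples all firms solely through the aggregate $Q$, which is precisely what makes the summation step work and what lets me avoid a full matrix inversion. The one assumption I would make explicit is interiority: the derived formula can return a negative value of $q_i(g)$ for extreme parameter configurations, in which case the nonnegativity boundary of $\Omega_i$ becomes active and the stationarity characterization no longer applies. The lemma as stated should therefore be read as restricting attention to parameter ranges for which every $q_i(g) \geq 0$, so that the variational inequality is indeed solved at an interior point.
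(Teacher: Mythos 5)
Your derivation is correct, but it takes a genuinely different route from the paper: the paper's proof is a one-line appeal to the standard Cournot equilibrium formula from Tirole \cite{Tir88} for profits of the form $\pi_i(q) = aq_i - \left(\sum_{j \in N} q_j\right)q_i - b_i q_i$, into which it substitutes $a = \alpha - \gamma_0$ and $b_i = f_i(\eta_i(g))$, whereas you rederive that textbook formula from scratch via first-order conditions and the aggregation trick (summing the $n$ stationarity conditions to get $(n+1)Q = n(\alpha-\gamma_0) - \sum_j f_j(\eta_j(g))$, then back-substituting). Your computation checks out at every step: the FOC $q_i + Q = \alpha - \gamma_0 - f_i(\eta_i(g))$, the strict concavity from the $-q_i^2$ term, and the final collection of the $f_i$ coefficient as $-n$ all match the claimed expression. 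What your approach buys is self-containment and transparency about \emph{why} the closed form exists (the coupling only through the aggregate $Q$); what the paper's approach buys is brevity, delegating correctness to a standard reference. Your explicit flag on interiority is also well placed: the formula is the unconstrained stationary point and characterizes the equilibrium only when every $q_i(g) \geq 0$, a caveat the paper leaves implicit in the lemma itself but addresses separately in Corollary \ref{cor: qi nonnegative}, which gives the sufficient condition (\ref{eq: qi nonnegative}) guaranteeing nonnegativity across all collaboration graphs.
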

\begin{proof} From \cite{Tir88}, for any oligopoly with profit function of the form:
\begin{equation}
\pi_i(q) = aq_i - \left(\sum_{j \in N} q_{j}\right)q_i - b_iq_i
\end{equation}
The resulting Cournot equilibrium point on quantities is:
\begin{equation}
q_i = \frac{a - nb_i + \sum_{j \neq i}b_j}{n+1}
\label{eqn:CournotEq}
\end{equation}
In our case, we have:
\begin{gather*}
a = \alpha - \gamma_0\\
b_i = f_{i}(\eta_{i}(g)) \quad \forall i
\end{gather*}
Substituting these definitions into Expression (\ref{eqn:CournotEq}) yields Expression (\ref{eq: quantity produced app1}). This completes the proof.
\end{proof}
\begin{remark}
It is worth noting that when for each firm $i$, $b_i = -\gamma \eta_{i}(g)$ then the cost function (\ref{eq: marginal cost}) and induced equilibrium quantity (\ref{eqn:CournotEq}) is equivalent to that used in Goyal and Joshi.
\end{remark}

\begin{cor}
\label{cor: qi nonnegative}
Suppose that $f$ is a nonnegative ($f (\eta) \geq 0$) convex function that has a minimum at $0$.  Further, suppose $f_{i}(\eta_{i}(g))=f(\eta_{i}(g)-k_{i})$ where $k_{i} \in \{0,1,\ldots ,n-1\}$.
If the parameters $\alpha$ and $\gamma_{0}$ and the function $f$ are such that:
\begin{equation}
\label{eq: qi nonnegative}
\alpha - \gamma_{0}-n \max \{ f(n-1),f(1-n) \}-\frac{1}{2}(n-1)\max \{ f(1)-f(0),f(-1)-f(0) \} > 0
\end{equation}
and $n \geq 2$, then the Cournot equilibrium quantities (\ref{eq: quantity produced app1}) are nonnegative for all firms and for all collaboration graphs and the following inequalities hold:
\begin{align}
2q_{i}(g)-\frac{n-1}{n+1} [f(1)-f(0)] &>0 \label{eq: ineq 1} \\
2q_{i}(g)-\frac{n-1}{n+1}[f(-1)-f(0)] &>0  \label{eq: ineq 2}
\end{align}
\end{cor}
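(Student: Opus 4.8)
The plan is to obtain a single uniform lower bound on the equilibrium quantity $q_{i}(g)$ that holds simultaneously for every firm $i$ and every collaboration graph $g$, and then to read off all three conclusions from that one bound. Starting from the closed form (\ref{eq: quantity produced app1}) supplied by Lemma~\ref{lem: qi}, I note that the nonnegativity claim and the two target inequalities (\ref{eq: ineq 1})--(\ref{eq: ineq 2}) all reduce to suitable lower bounds on $q_{i}(g)$, so it suffices to control $q_{i}(g)$ from below in the worst case over all admissible degrees.

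First I would bound the numerator of (\ref{eq: quantity produced app1}) term by term. Since $f \geq 0$ by hypothesis, the cross term $\sum_{j \neq i} f_{j}(\eta_{j}(g)) \geq 0$ and may be discarded, only decreasing the numerator. It then remains to bound the negative contribution $-n f_{i}(\eta_{i}(g)) = -n f(\eta_{i}(g)-k_{i})$ from below, i.e.\ to bound $f(\eta_{i}(g)-k_{i})$ from above. For this I would use that in any graph on $n$ nodes every degree satisfies $0 \leq \eta_{i}(g) \leq n-1$, and since $k_{i} \in \{0,1,\ldots,n-1\}$ the shifted argument $\eta_{i}(g)-k_{i}$ lies in the interval $[1-n,\,n-1]$.

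The key step is the convexity argument: a convex function on a closed interval attains its maximum at an endpoint, so
\[
f(\eta_{i}(g)-k_{i}) \leq \max\{f(1-n),\,f(n-1)\}
\]
for every firm and every graph. Substituting the two bounds into (\ref{eq: quantity produced app1}) yields the uniform estimate
\[
q_{i}(g) \geq \frac{\alpha-\gamma_{0} - n\max\{f(n-1),\,f(1-n)\}}{n+1}.
\]
Finally I would invoke the standing hypothesis (\ref{eq: qi nonnegative}). Writing $M=\max\{f(1)-f(0),\,f(-1)-f(0)\}$, which is nonnegative because $f$ has its minimum at $0$, the hypothesis states precisely that the numerator above exceeds $\tfrac{1}{2}(n-1)M$. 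Hence $q_{i}(g) > \tfrac{(n-1)M}{2(n+1)} \geq 0$, giving nonnegativity at once. Since $M$ dominates both $f(1)-f(0)$ and $f(-1)-f(0)$, the same estimate gives $2q_{i}(g) > \tfrac{n-1}{n+1}M \geq \tfrac{n-1}{n+1}[f(1)-f(0)]$ and likewise with $f(-1)-f(0)$, which are exactly (\ref{eq: ineq 1}) and (\ref{eq: ineq 2}).

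I expect the only genuine subtlety to be the convexity step: one must verify that the argument really ranges over $[1-n,\,n-1]$ (which rests on the graph-theoretic bound on degrees together with the admissible range of $k_{i}$) and that the endpoint-maximum property of convex functions is applied on the correct interval. Everything else is monotone bounding and substitution.
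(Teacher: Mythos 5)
Your proof is correct and takes essentially the same route as the paper's: both arguments discard the nonnegative cross term $\sum_{j \neq i} f_{j}(\eta_{j}(g))$, bound $f(\eta_{i}(g)-k_{i})$ by $\max\{f(n-1),f(1-n)\}$ via the endpoint-maximum property of a convex function on $[1-n,\,n-1]$, and then use hypothesis (\ref{eq: qi nonnegative}) divided by $n+1$ to obtain (\ref{eq: ineq 1}) and (\ref{eq: ineq 2}), from which nonnegativity of $q_{i}(g)$ follows. The only difference is presentational: you assemble the estimates into one uniform lower bound on $q_{i}(g)$ before invoking the hypothesis, whereas the paper manipulates the hypothesis inequality step by step until $q_{i}(g)$ appears, and these are mathematically equivalent.
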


\begin{proof}
Since $n \geq 2$ and $f$ is convex and has a minimum at $0$, this implies that $\frac{n-1}{n+1} [f(1)-f(0)]$ and $\frac{n-1}{n+1} [f(-1)-f(0)]$ are non-negative.  If (\ref{eq: ineq 1}) and  (\ref{eq: ineq 2}) hold, then $q_{i}(g)$ is non-negative and hence it suffices to only show that (\ref{eq: ineq 1}) and  (\ref{eq: ineq 2}) are implied by (\ref{eq: qi nonnegative}).

For all $i$, function $f_{i}$ is a convex function of the degree of node $i$ in the graph $g$; the degree of node $i$ must take an integer value between $0$ and $n-1$, which due to the convexity of $f_{i}$ and the fact that $k_{i} \in \{0,1,\ldots n-1\}$ implies that the maximum of $f_{i}$ is equivalent to $\max \{f_{i}(0),f_{i}(n-1) \}$ and is less than $\max\{f(n-1),f(-n+1)\}$.  That is,
\begin{equation*}
f_{i}(\eta_{i}(g)) \leq \max \{f_{i}(\eta_{i}(g)) \}=\max \{f_{i}(0),f_{i}(n-1) \} \leq \max\{f(n-1),f(-n+1)\}
\end{equation*}
This means that (\ref{eq: qi nonnegative}) implies:
\begin{equation}
\label{eq: ineq 3}
\alpha - \gamma_{0}-n f_{i}(\eta_{i}(g))-\frac{1}{2}(n-1)\max \{f(1)-f(0),f(-1)-f(0) \} > 0 \quad \forall i
\end{equation}
Since, all $f_{i}(\eta_{i}(g)) \geq 0$, we may add $\sum_{j \neq i}f_{j}(\eta_{j}(g))$ to the left side of  (\ref{eq: ineq 3}) without changing the inequality:
\begin{equation}
\label{eq: ineq 4}
\alpha - \gamma_{0}-n f_{i}(\eta_{i}(g))+\sum_{j \neq i}f_{j}(\eta_{j}(g))-\frac{1}{2}(n-1)\max(f(1)-f(0),f(-1)-f(0)) > 0 \quad \forall i
\end{equation}
Dividing by $n+1$ yields:
\begin{equation}
\label{eq: ineq 5}
\frac{\alpha - \gamma_{0}-n f_{i}(\eta_{i}(g))+\sum_{j \neq i}f_{j}(\eta_{j}(g))}{n+1}-\frac{1}{2}\frac{n-1}{n+1}\max(f(1)-f(0),f(-1)-f(0)) > 0 \quad \forall i
\end{equation}
From Lemma \ref{lem: qi} this simplifies to:
\begin{equation}
\label{eq: ineq 6}
q_{i}(g)-\frac{1}{2}\frac{n-1}{n+1}\max(f(1)-f(0),f(-1)-f(0)) > 0 \quad \forall i
\end{equation}
Multiplying through by two yields:
\begin{align*}
2q_{i}(g)>\frac{n-1}{n+1}\max(f(1)-f(0),f(-1)-f(0)) &>\frac{n-1}{n+1}[f(1)-f(0)]  \quad \forall i\\
2q_{i}(g)>\frac{n-1}{n+1}\max(f(1)-f(0),f(-1)-f(0)) &>\frac{n-1}{n+1}[f(-1)-f(0)]  \quad \forall i
\end{align*}
Now (\ref{eq: ineq 1}) and  (\ref{eq: ineq 2}) immediately follow.
\end{proof}
\begin{remark}
This essentially means that the steeper a function $f$ around zero and on the interval $(-n+1,n-1)$, the greater the quantity $\alpha-\gamma_0$ is needed to ensure the theorem proved later in this section.  It is worth pointing out that this bound may often not be tight (i.e., the inequalities may hold true and production quantities may be positive even when the condition is not met).
\end{remark}

\begin{thm}
\label{thm: stable assym k distn graph}
Suppose that $f$ is a nonnegative ($f (\eta) \geq 0$) convex function that has a minimum at $0$.  Further, suppose $f_{i}(\eta_{i}(g))=f(\eta_{i}(g)-k_{i})$.  Define the change in $f$ as $\triangle^{-} f_{i}(k_{i})=f_{i}(k_{i}-1)-f_{i}(k_{i})=f(-1)-f(0)=\triangle^{-} f(0)$ and $\triangle^{+} f_{i}(k_{i})=f_{i}(k_{i}+1)-f_{i}(k_{i})=f(1)-f(0)=\triangle^{+} f(0)$.  Suppose $n \geq 2$ firms compete in an oligopoly with market demand $P=\alpha - \sum_{i \in N}q_{i}$ and marginal costs $c_i(g) = \gamma_0 + f_i(\eta_i(g))$.  If the parameters $\alpha$ and $\gamma_{0}$ and the functions $f_i$ obey condition (\ref{eq: qi nonnegative}), then the equivalence class of graphs $[g]_{\eta}$ such that $\eta_{i}(g)=k_{i}$ is an equivalence class of stable collaboration graphs.
\end{thm}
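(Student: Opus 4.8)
The plan is to reduce pairwise stability to a sign analysis of equilibrium quantities, exploiting the fact that for linear-demand Cournot competition the equilibrium profit collapses to the square of equilibrium output. First I would write the first-order condition for firm $i$ coming from \eqref{eqn: profit oligopoly}, namely $(\alpha-\gamma_0)-\sum_{j} q_j - q_i - f_i(\eta_i(g)) = 0$, and substitute it back into $Y_i(g)$ to obtain the identity $Y_i(g) = q_i(g)^2$. This turns every payoff comparison into a comparison of quantities, which by Corollary \ref{cor: qi nonnegative} are nonnegative for every graph under hypothesis \eqref{eq: qi nonnegative}, so that squaring preserves order.

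Next I would fix an arbitrary $g \in [g]_\eta$, so that $\eta_i(g)=k_i$ and hence $f_i(\eta_i(g)) = f(0)$ for every firm; by Lemma \ref{lem: qi} this yields the common value $q_i(g) = (\alpha-\gamma_0-f(0))/(n+1)$. The key structural point is that each firm sits exactly at the minimum of its cost function, so any unilateral change of its degree can only raise its cost. Adding a nonexistent link $ij$ raises both $\eta_i$ and $\eta_j$ by one, and direct substitution in \eqref{eq: quantity produced app1} gives $q_i(g+ij) = q_i(g) - \frac{n-1}{n+1}\bigl[f(1)-f(0)\bigr]$; deleting an existing link $ij$ lowers both degrees by one and gives $q_i(g-ij) = q_i(g) - \frac{n-1}{n+1}\bigl[f(-1)-f(0)\bigr]$. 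Convexity of $f$ with minimum at $0$ makes both bracketed differences nonnegative, so either deviation weakly lowers firm $i$'s output.

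I would then convert these quantity shifts into payoff changes via the difference-of-squares factorization $Y_i(g') - Y_i(g) = \bigl(q_i(g')-q_i(g)\bigr)\bigl(q_i(g')+q_i(g)\bigr)$. Writing $\Delta = \frac{n-1}{n+1}\bigl[f(1)-f(0)\bigr]$ gives $Y_i(g+ij)-Y_i(g) = -\Delta\bigl(2q_i(g)-\Delta\bigr)$, whose second factor is positive precisely by inequality \eqref{eq: ineq 1}; hence $Y_i(g+ij)\le Y_i(g)$, so the premise of clause (2) of Definition \ref{definition: stability} never holds and that clause is vacuously satisfied. Symmetrically, with $\Delta' = \frac{n-1}{n+1}\bigl[f(-1)-f(0)\bigr]$ I get $Y_i(g)-Y_i(g-ij) = \Delta'\bigl(2q_i(g)-\Delta'\bigr)\ge 0$ using \eqref{eq: ineq 2}, which is clause (1). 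Exchanging $i$ and $j$ handles both endpoints of each link, and since nothing beyond the degree sequence of $g$ was used, the conclusion applies verbatim to every member of $[g]_\eta$.

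I expect the genuine content to live in two places. The conceptual pivot is recognizing $Y_i(g)=q_i(g)^2$, without which the payoff algebra is unwieldy. The technical pivot is that the apparently ad hoc constants in Corollary \ref{cor: qi nonnegative} are exactly the factors $2q_i(g)-\Delta$ and $2q_i(g)-\Delta'$ produced by the difference of squares, so the main obstacle is careful bookkeeping: confirming that a unit change in the two affected degrees propagates through \eqref{eq: quantity produced app1} to the stated $\pm\frac{n-1}{n+1}\bigl[f(\pm 1)-f(0)\bigr]$ shifts, and that these shifts never drive a quantity negative so that monotonicity of squaring is legitimate.
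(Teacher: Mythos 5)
Your proposal is correct and takes essentially the same approach as the paper: both use Lemma \ref{lem: qi} to compute the quantity shifts $\mp\frac{n-1}{n+1}\bigl[f(\pm 1)-f(0)\bigr]$ (exploiting $\triangle^{\pm}f_i(k_i)=\triangle^{\pm}f_j(k_j)$) and then sign the payoff change by the factors $2q_i(g)-\frac{n-1}{n+1}\triangle^{\pm}f(0)$ supplied by Corollary \ref{cor: qi nonnegative}, arriving at exactly the paper's factorizations (\ref{eq: profit change 3A}) and (\ref{eq: profit change 7}). Your explicit identity $Y_i(g)=q_i(g)^2$ with the difference-of-squares step is just a streamlined presentation of what the paper does implicitly when it expands $Y_i=q_i(P-c_i)$ while tracking $Q$, $P$, and $c_i$ separately.
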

\begin{proof}
Let $g$ be a graph in the equivalence class of graphs $[g]_{\eta}$, that is, $g$ has a degree sequence such that $\eta_{i}(g)=k_{i}$ for all firms $i$.  Consider a firm $i$ who may consider dropping its link with node $j$.  If node $i$ drops its link with node $j$ leading to graph $g-ij$, then $\eta_{i}(g-ij)=k_{i}-1$ and $\eta_{j}(g-ij)=k_{j}-1$, while $\eta_{r}(g-ij)=k_{r}$ for $r \not \in \{i,j\}$.  
Using Lemma \ref{lem: qi}
\begin{equation}
q_{i} = \frac{\alpha - \gamma_{0}-n f_{i}(\eta_{i}(g))+ \sum_{j \neq i \in N}f_{j}(\eta_{j}(g)) }{n+1}
\end{equation}
Calculate:
\begin{align*}
q_{i}(g-ij) &= q_{i}(g)- \triangle^{-} f_{i}(k_{i}) \left( \frac{n}{n+1} \right)+ \triangle^{-} f_{j}(k_{j}) \left( \frac{1}{n+1}  \right)\\
q_{j}(g-ij) &= q_{j}(g)- \triangle^{-} f_{j}(k_{j}) \left( \frac{n}{n+1} \right)+ \triangle^{-} f_{i}(k_{i}) \left( \frac{1}{n+1}  \right)\\
q_{r}(g-ij) &= q_{r}(g)+ \triangle^{-} f_{i}(k_{i}) \left( \frac{1}{n+1} \right)+ \triangle^{-} f_{j} (k_{j})\left( \frac{1}{n+1}  \right)
\end{align*}
It then follows that
\begin{align*}
Q(g-ij)&=Q(g)- \left( \frac{1}{n+1} \right) (\triangle^{-} f_{i}(k_{i})+\triangle^{-} f_{j}(k_{j}))\\
P(g-ij)&=P(g)+\left( \frac{1}{n+1} \right)(\triangle^{-} f_{i}(k_{i})+\triangle^{-} f_{j}(k_{j}))  \\
c_{i}(g-ij)&=c_{i}(g)+\triangle^{-} f_{i}(k_{i})
\end{align*}
Now, we can calculate $Y_{i}(g-ij)$ in terms of $Y_{i}(g)$:
\begin{align*}
Y_{i}(g-ij)&=q_{i}(g-ij)[P(g-ij)-c_{i}(g-ij)] \\
   &= Y_{i}(g)+ q_{i}(g) \left( \frac{2}{n+1} \right) [  \triangle^{-} f_{j}(k_{j}) - n \triangle^{-} f_{i}(k_{i})]
   +\left( \frac{[  \triangle^{-} f_{j}(k_{j}) - n \triangle^{-} f_{i}(k_{i})]}{n+1} \right)^2
\end{align*}
Since $f_{i}(\eta_{i}(g))=f(\eta_{i}(g)-k_{i})$ this implies that $\triangle^{-} f_{i}(k_{i})=\triangle^{-} f_{j}(k_{j})$ and we obtain (\ref{eq: profit change 1A}) and then (\ref{eq: profit change 2A}) and (\ref{eq: profit change 3A}) through algebraic manipulation.  Finally, by the assumptions of the theorem and condition (\ref{eq: qi nonnegative}) each of the quantities $\triangle^{-} f_{i}(k_{i})$, $\frac{n-1}{n+1}$, and $2 q_{i}(g) -  \frac{n-1}{n+1} \triangle^{-} f_{i}(k_{i})$ are nonnegative implying (\ref{eq: profit change 4A}).
\begin{align}
Y_{i}(g-ij)-Y_{i}(g)&=2 q_{i}(g)\triangle^{-} f_{i}(k_{i}) \left( \frac{1-n}{n+1} \right)+(\triangle^{-} f_{i}(k_{i}))^{2} \left( \frac{1-n}{n+1} \right)^2 \label{eq: profit change 1A}\\
&=  \triangle^{-} f_{i}(k_{i}) \left(  \frac{1-n}{n+1}   \right) \left(2 q_{i}(g) + \frac{1-n}{n+1} \triangle^{-} f_{i}(k_{i})    \label{eq: profit change 2A} \right)\\
&= - \triangle^{-} f_{i}(k_{i}) \left(  \frac{n-1}{n+1}   \right) \left(2 q_{i}(g) -  \frac{n-1}{n+1} \triangle^{-} f_{i}(k_{i})   \label{eq: profit change 3A} \right)\\
&<0 \label{eq: profit change 4A}
\end{align}
This implies that if firm $i$ attempts to drop link $ij$, then $Y_{i}(g) > Y_{i}(g-ij)$ and thus firm $i$ decreases its profit.  The same will be true for firm $j$.  Hence, no firm has an incentive to drop a link from graph $g$.
Now, we will consider the case where firm $i$ attempts to add a link to the graph $g$, giving $g+ij$ under the assumption that the link $ij$ does not exist in graph $g$.  This analysis will follow closely the analysis for $g-ij$.  First note that $\eta_{i}(g)=k_{i}$ for all firms $i$ and $\eta_{i}(g+ij)=k_{i}+1$ and $\eta_{j}(g+ij)=k_{j}+1$, while $\eta_{r}(g+ij)=k_{r}$ for $r \not \in \{i,j\}$.  We define $\triangle^{+} f_{i}(k_{i})$ as $\triangle^{+} f_{i}(k_{i})=f_{i}(k+1)-f_{i}(k)$; note the subtle difference from the definition of $\triangle^{-} f_{i}(k_{i})$.  Again using Lemma \ref{lem: qi}, we calculate the production quantity for each node in graph $g+ij$:
\begin{align*}
q_{i}(g+ij) &= q_{i}(g)- \triangle^{+} f_{i}(k_{i}) \left( \frac{n}{n+1} \right)+ \triangle^{+} f_{j}(k_{j}) \left( \frac{1}{n+1}  \right)\\
q_{j}(g+ij) &= q_{j}(g)- \triangle^{+} f_{j}(k_{j}) \left( \frac{n}{n+1} \right)+ \triangle^{+} f_{i}(k_{i}) \left( \frac{1}{n+1}  \right)\\
q_{r}(g+ij) &= q_{r}(g)+ \triangle^{+} f_{i}(k_{i}) \left( \frac{1}{n+1} \right)+ \triangle^{+} f_{j}(k_{j}) \left( \frac{1}{n+1}  \right)
\end{align*}
We can then calculate the corresponding total production quantity $Q$, the market price $P$ and marginal costs for each player for the graph $g+ij$:
\begin{align*}
Q(g+ij)&=Q(g)- \left( \frac{1}{n+1} \right) (\triangle^{+} f_{i}(k_{i})+\triangle^{+} f_{j}(k_{j}))\\
P(g+ij)&=P(g)+\left( \frac{1}{n+1} \right)(\triangle^{+} f_{i}(k_{i})+\triangle^{+} f_{j}(k_{j}))  \\
c_{i}(g+ij)&=c_{i}(g)+\triangle^{+} f_{i}(k_{i})
\end{align*}
Now, we can calculate $Y_{i}(g+ij)$ in terms of $Y_{i}(g)$:
\begin{align*}
Y_{i}(g+ij)&=q_{i}(g+ij)[P(g+ij)-c_{i}(g+ij)] \\
   &= Y_{i}(g)+ q_{i}(g) \left( \frac{2}{n+1} \right) [  \triangle^{+} f_{j}(k_{j}) - n \triangle^{+} f_{i}(k_{i})]
   +\left( \frac{[  \triangle^{+} f_{j}(k_{j}) - n \triangle^{+} f_{i}(k_{i})]}{n+1} \right)^2
\end{align*}
Since $f_{i}(\eta_{i}(g))=f(\eta_{i}(g)-k_{i})$ this implies that $\triangle^{+} f_{i}(k_{i})=\triangle^{+} f_{j}(k_{j})$ and we obtain (\ref{eq: profit change 5}) and then (\ref{eq: profit change 6}) and (\ref{eq: profit change 7}) through algebraic manipulation.  Finally, by the assumptions of the theorem and condition (\ref{eq: qi nonnegative}), each of the quantities $\triangle^{+} f_{i}(k_{i})$, $\frac{n-1}{n+1}$, and $2 q_{i}(g) -     \frac{n-1}{n+1} \triangle^{+} f_{i}(k_{i})$ are positive implying (\ref{eq: profit change 8}).
\begin{align}
Y_{i}(g+ij)-Y_{i}(g)&=2 q_{i}(g)\triangle^{+} f_{i}(k_{i}) \left( \frac{1-n}{n+1} \right)+(\triangle^{+} f_{i}(k_{i}))^{2} \left( \frac{1-n}{n+1} \right)^2 \label{eq: profit change 5}\\
&=  \triangle^{+} f_{i}(k_{i}) \left(  \frac{1-n}{n+1}   \right) \left(2 q_{i}(g) + \frac{1-n}{n+1} \triangle^{+} f_{i}(k_{i})    \label{eq: profit change 6} \right)\\
&= - \triangle^{+} f_{i}(k_{i}) \left(  \frac{n-1}{n+1}   \right) \left(2 q_{i}(g) -     \frac{n-1}{n+1} \triangle^{+} f_{i}(k_{i})   \label{eq: profit change 7} \right)\\
&<0 \label{eq: profit change 8}
\end{align}
This implies that if firm $i$ attempts to add a link $ij$, then $Y_{i}(g) > Y_{i}(g+ij)$ and the firm decreases its profit.  The same will be true for firm $j$.  Hence, no firm has an incentive to add a link to graph $g$. Since no firm has an incentive to add or drop a link to graph $g$, it is stable. This completes the proof.
\end{proof}

\section{Collaborative Spatial Oligopolies}
Spatial Oligopolies (Oligopolies on spatially separated markets) have been studied extensively \cite{Harker1986,dafermos1987,Miller1996,Nagurney2009}.  In this section we extend the collaborative oligopoly model of Goyal and Joshi by applying it to spatially separated markets and we extend the existing literature in spatial oligopolies by allowing firm collaboration.  We seek to find which graphs $g$ are stable collaboration graphs.  As in prior sections, $N=\{1,2, \ldots n\}$ will denote firms, which are nodes on the collaboration graph.  Alternatively, there is a spatial transport network with nodes denoted as $V=\{1,2, \ldots v\}$.  Consumer demand at transport node $l \in V$ for firm $i \in N$ is denoted as $d_{li}$ and the total demand at node $l$ is denoted as $D_{l}=\sum_{i}d_{li}$.
Denote the vector  $\mathbf{d}_{i}=[d_{li}]_{l \in V}$ as the demand vector for firm $i$ across all nodes.  The quantity produced by firm $i$ is again denoted as $q_{i}$. Noting that $q_{i} =\sum_{l \in V}{d_{li}}$, we can eliminate $q_{i}$ by formulating all expressions in terms of $\mathbf{d}_{i}=[d_{li}]_{l \in V}$.

The induced price at node $l$ is denoted as $P_{l}(D_{l})=P_{l}(d_{l1},d_{l2},\ldots,d_{ln})$.  The marginal production cost is $c_{i}(q_{i}|g)=f_{i}(q_{i},\eta_{i}(g))$ as before in (\ref{eq: marginal cost}) but is now denoted as $c_{i}(\sum_{l \in V}{d_{li}}|g)=f_{i}(\sum_{l \in V}{d_{li}},\eta_{i}(g))$.  However, now there is an additional marginal cost to ship a unit of quantity to node $l$ for firm $i$ denoted as $s_{li}$
\footnote{Each firm is not explicitly placed on the transport network, but its location may be implied through the $s_{li}$ values}.  Define $Y_{i}(\mathbf{d}_{i}|g)$ as the profit for firm $i$ with collaboration graph $g$:
\begin{equation*}
Y_{i}(\mathbf{d}_{i}|g)=\sum_{l\in V}{d_{li} \left[ p_{l}({D_{l}})-s_{li}-f_{i}\left(\sum_{l \in V}{d_{li}},\eta_{i}(g)\right) \right] }
\end{equation*}
Hence, the firm $i$ will solve the problem
\begin{align}
\max \;\; &Y_{i}(\mathbf{d}_{i}|g) \nonumber \\
\text{s.t.} \;\;& \mathbf{d}_{i} \in \Theta_{i}
\end{align}
where $\Theta_{i}=\{\mathbf{d}_{i}:\mathbf{d}_{i} \geq \mathbf{0}\}$.  We can calculate the gradient of the objective for firm $i$:
\begin{displaymath}
\nabla_{\mathbf{d}_{i}} Y_{i}(\mathbf{d}_{i})= \left[
P_{l}({D_{l}})-s_{li} - f_{i}\left(\sum_{l \in V}{d_{li}},\eta_{i}(g)\right)-d_{li}-d_{li} \frac{\partial f_{i}(\sum_{l \in V}{d_{li}},\eta_{i}(g))}{\partial d_{li}}
\right]_{l \in V}
\end{displaymath}
Each firm $i$ will solve the equivalent variational inequality by finding $\mathbf{d}^{\ast}_{i} \geq 0$ such that:
\begin{equation}
\langle \nabla_{\mathbf{d}_{i}} Y_{i}(\mathbf{d}^{\ast}_{i}|g),\mathbf{d}_{i} -\mathbf{d}^{\ast}_{i}   \rangle \geq 0
\end{equation}


We may now find an equilibrium to the spatial oligopoly for all firms by solving the single composed variational inequality.
Find $\mathbf{d}^{\ast} \geq 0$ such that:
\begin{equation}
\langle \nabla_{\mathbf{d}} Y(\mathbf{d}^{\ast}|g),\mathbf{d} -\mathbf{d}^{\ast}   \rangle \geq 0
\end{equation}
Where $\mathbf{d}=[\mathbf{d}_{i}]_{i \in N}$ and $Y(\mathbf{d}^{\ast}|g)=[Y_{i}(\mathbf{d}^{\ast}_{i}|g)]_{i \in N}$.

With such a spatial model, it again becomes difficult to analytically find stable graphs.  Stability is difficult to determine analytically because in order to determine if a link should exist, the value a node receives from the link must be contrasted from the value without the link.  This is difficult without using sensitivity analysis for variational inequalities.  Instead we seek to show a set of models that do yield analytical results.

\subsection{Nonlinear production costs in Spatial Collaborative Oligopoly}
Consider a marginal cost function:
\begin{equation}
c_i(g) = \gamma_0 + f_i(\eta_i(g))
\label{eqn:MarginalCostSpatial}
\end{equation}
where $f_{i}$ is some function $f_{i}:\mathbb{R} \rightarrow \mathbb{R}$.  The marginal cost to ship a unit of quantity to node $l$ for firm $i$ is again denoted as $s_{li}$.
Each firm maximizes its profit by solving its own nonlinear problem:
\begin{align}
\max &\sum_{l\in V}{d_{li}[P_{l}(d_{l1},d_{l2},\ldots,d_{ln})-s_{li} -\gamma_0 - f_i(\eta_i(g))]}
\nonumber \\
\text{s.t.}\;\; &  0 \leq d_{li} \nonumber \;\;\;\;\;\;\;\;\;\;\; \forall{i \in N},l \in V
\end{align}
\begin{remark}
This nonlinear program that each firm will solve has been decoupled, such that now at each transport node $l$, the firms participate in oligopolistic competition that is independent from the competition at each other node.  However, at each node, each firm has a different cost due to the variability of the shipment cost to that node for each firm.
\end{remark}
\begin{lem}
\label{lem: dif}
Suppose we have an oligopoly consisting of $n$ firms in which collaboration is defined by the graph $g$, the demand function at node $l$ is $P_{l}(d_{l1},d_{l2},\ldots,d_{ln}) = \alpha_l - \sum_{i \in N}{d_{li}}$, and the profit function (allocation rule) for Firm $i$ in that oligopoly is given by:
\begin{equation}
Y_{i}(g,d_{1i},d_{2i},\ldots,d_{li})=\sum_{l\in V}{d_{li}[ \alpha_l - \sum_{j \in N}{d_{lj}} -s_{li} -\gamma_0 - f_i(\eta_i(g))]}
\label{eqn: profit oligopoly}
\end{equation}
then the demand met at node $l$ by firm $i$ is:
\begin{equation}
\label{eq: quantity produced app2}
d_{li}=\frac{\alpha_l-\gamma_0 -n(s_{li}+ f_i(\eta_i(g)) ) + \sum_{j \neq i} [s_{lj}+ f_j(\eta_j(g))  ]    }{n+1}
\end{equation}
\end{lem}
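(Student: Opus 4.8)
The plan is to exploit the separable structure already highlighted in the preceding remark. Because the marginal cost $\gamma_0 + f_i(\eta_i(g))$ in (\ref{eqn:MarginalCostSpatial}) depends only on the collaboration graph and not on the aggregate output $\sum_{l} d_{li}$, firm $i$'s profit in the lemma hypothesis is a sum of terms, one per transport node $l$, in which the $l$-th summand depends only on the decision variables $\{d_{lj}\}_{j \in N}$ at that node. Since the feasible set $\Theta_i = \{\mathbf{d}_i : \mathbf{d}_i \geq \mathbf{0}\}$ is likewise a product of per-node nonnegativity constraints, firm $i$'s best-response problem decouples node by node. Consequently, computing the Cournot--Nash equilibrium of the spatial game reduces to solving, independently for each $l \in V$, an ordinary (aspatial) Cournot oligopoly among the $n$ firms, and the whole argument collapses onto Lemma \ref{lem: qi}.

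First I would write out the $l$-th summand explicitly and regroup the constant terms, obtaining
\[
d_{li}\left[(\alpha_l - \gamma_0) - \sum_{j \in N} d_{lj} - \left(s_{li} + f_i(\eta_i(g))\right)\right].
\]
This is exactly the profit form $\pi_i(q) = a q_i - \left(\sum_j q_j\right) q_i - b_i q_i$ appearing in the proof of Lemma \ref{lem: qi}, under the identifications $a = \alpha_l - \gamma_0$ and $b_i = s_{li} + f_i(\eta_i(g))$. The only new feature relative to the aspatial case is that the firm-specific cost coefficient $b_i$ now carries the shipping term $s_{li}$ in addition to the collaboration term $f_i(\eta_i(g))$, and that the intercept $a$ varies across nodes through $\alpha_l$; the algebraic form, however, is unchanged.

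Next I would invoke the Cournot equilibrium formula (\ref{eqn:CournotEq}) from Tirole \cite{Tir88}, namely $q_i = (a - n b_i + \sum_{j \neq i} b_j)/(n+1)$, and substitute the identifications above; this yields precisely (\ref{eq: quantity produced app2}). To keep the argument self-contained I would also note that each per-node summand is strictly concave in $d_{li}$ (its second derivative in $d_{li}$ equals $-2$), so the first-order stationarity conditions characterize the unique interior best response, and the stated formula is the genuine equilibrium on the region where the nonnegativity constraints are slack.

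The main obstacle is conceptual rather than computational: one must justify that the node-by-node decoupling is legitimate. This rests entirely on the cost function (\ref{eqn:MarginalCostSpatial}) depending on $\eta_i(g)$ alone rather than on the total output $\sum_l d_{li}$. Were the marginal cost to depend on aggregate quantity, the summands would be linked through a shared $q_i$, the firm's objective would no longer separate, and the clean per-node Cournot reduction would fail. A secondary point to flag is that (\ref{eq: quantity produced app2}) is the equilibrium only where these expressions are nonnegative, i.e.\ under the spatial analogue of the positivity condition of Corollary \ref{cor: qi nonnegative}; outside that region the active nonnegativity constraints must be treated directly through the variational inequality.
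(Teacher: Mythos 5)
Your proposal is correct and follows essentially the same route as the paper's proof: both rearrange the $l$-th summand into the form $a q_i - \left(\sum_j q_j\right)q_i - b_i q_i$ with $a = \alpha_l - \gamma_0$ and $b_i = s_{li} + f_i(\eta_i(g))$, and then apply the Cournot equilibrium formula from Tirole node by node. Your explicit justification of the per-node decoupling (which the paper relegates to the remark preceding the lemma) and your concavity and nonnegativity caveats are sound additions but do not change the argument.
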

\begin{proof}
The profit for firm $i$ can be rearranged:
\begin{align*}
Y_{i}(g,d_{1i},d_{2i},\ldots,d_{li})&=\sum_{l\in V}{d_{li}[ \alpha_l - \sum_{j \in N}{d_{lj}} -s_{li} -\gamma_0 - f_i(\eta_i(g))]}\\
&=\sum_{l\in V} (\alpha_l-\gamma_0  ){d_{li}}- \left(   \sum_{j \in N}{d_{lj}} \right){d_{li}}-  \left(s_{li}+ f_i(\eta_i(g)) \right)d_{li}
\end{align*}
From \cite{Tir88}, for any oligopoly with profit function of the form:
\begin{equation}
Y_i(q) = aq_i - \left(\sum_{j \in N} q_{j}\right)q_i - b_iq_i
\end{equation}
The resulting Cournot equilibrium point on quantities is:
\begin{equation}
q_i = \frac{a - nb_i + \sum_{j \neq i}b_j}{n+1}
\label{eqn:CournotEq2}
\end{equation}
In our case, we have an oligopoly at each location $l$ and quantities $d_{li}$ with parameters :
\begin{gather*}
a = \alpha_{l} - \gamma_0\\
b_i = s_{li}+f_{i}(\eta_{i}(g)) \quad \forall i
\end{gather*}
Substituting these definitions into Expression (\ref{eqn:CournotEq2}) yields Expression (\ref{eq: quantity produced app2}). This completes the proof.
\end{proof}

\begin{cor}
\label{cor: dli nonnegative new}
Suppose that $f$ is a nonnegative ($f (\eta) \geq 0$) convex function that has a minimum at $0$.  Further, suppose $f_{i}(\eta_{i}(g))=f(\eta_{i}(g)-k_{i})$ where $k_{i} \in \{0,1,\ldots ,n-1\}$.
If the function $f$ and parameters $\alpha$, $\gamma_{0}$, and $s$ are such that:
\begin{multline}
\label{eq: dli nonnegative new}
   \alpha - \gamma_{0}-n \left[ \max_{l \in V,i \in N}s_{li}+\max \{ f(n-1),f(1-n) \} \right]\\
   -\frac{1}{2}(n-1)\max \{ f(1)-f(0),f(-1)-f(0) \} > 0
\end{multline}
and $n \geq 2$, then the Cournot equilibrium quantities (\ref{eq: quantity produced app2}) are nonnegative for all firms at all locations and for all collaboration graphs and the following inequalities hold:
\begin{align}
2d_{li}(g)-\frac{n-1}{n+1} [f(1)-f(0)] &>0 \quad \forall i \in N, l \in V \label{eq: ineq 1 new} \\
2d_{li}(g)-\frac{n-1}{n+1}[f(-1)-f(0)] &>0 \quad \forall i \in N, l \in V  \label{eq: ineq 2 new}
\end{align}
\end{cor}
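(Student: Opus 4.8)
The plan is to mirror the proof of Corollary \ref{cor: qi nonnegative}, with the shipment surcharge $s_{li}$ absorbed into the per-firm cost coefficient. As in that argument, since $n \geq 2$ and $f$ is convex with a minimum at $0$, both $\frac{n-1}{n+1}[f(1)-f(0)]$ and $\frac{n-1}{n+1}[f(-1)-f(0)]$ are nonnegative; consequently, if the two inequalities (\ref{eq: ineq 1 new}) and (\ref{eq: ineq 2 new}) hold, then each $d_{li}(g)$ is automatically nonnegative. Hence it suffices to show that condition (\ref{eq: dli nonnegative new}) implies (\ref{eq: ineq 1 new}) and (\ref{eq: ineq 2 new}).

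First I would bound the per-firm cost coefficient $b_i = s_{li} + f_i(\eta_i(g))$. Exactly as in Corollary \ref{cor: qi nonnegative}, convexity of $f$ together with $k_i \in \{0,\ldots,n-1\}$ and the fact that $\eta_i(g)$ ranges over integers in $[0,n-1]$ yields $f_i(\eta_i(g)) \leq \max\{f(n-1),f(1-n)\}$. Taking the shipment costs to be nonnegative, I would bound $s_{li} \leq \max_{l \in V,\, i \in N} s_{li}$. Combining these two bounds with condition (\ref{eq: dli nonnegative new}), and reading $\alpha$ as a lower bound on the node intercepts (i.e., $\alpha \leq \alpha_l$ for all $l$), gives for every firm $i$ and node $l$:
\begin{equation*}
\alpha_l - \gamma_0 - n\bigl(s_{li} + f_i(\eta_i(g))\bigr) - \tfrac{1}{2}(n-1)\max\{f(1)-f(0),\, f(-1)-f(0)\} > 0.
\end{equation*}

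Next I would add the nonnegative sum $\sum_{j \neq i}[s_{lj} + f_j(\eta_j(g))]$ to the left-hand side—this is where nonnegativity of both $f$ and the shipment costs $s_{lj}$ is essential—thereby preserving the strict inequality. Dividing through by $n+1$ and identifying the leading quotient as $d_{li}(g)$ via Lemma \ref{lem: dif}, then multiplying by two, produces
\begin{equation*}
2 d_{li}(g) > \frac{n-1}{n+1}\max\{f(1)-f(0),\, f(-1)-f(0)\},
\end{equation*}
from which (\ref{eq: ineq 1 new}) and (\ref{eq: ineq 2 new}) follow at once, since the maximum dominates each individual difference. The main subtlety—distinct from the aspatial case—is the correct handling of the node-dependent intercept $\alpha_l$ and the shipment surcharge $s_{li}$: the bound must hold at the worst node and worst firm simultaneously, which is precisely why condition (\ref{eq: dli nonnegative new}) replaces the single cost maximum of Corollary \ref{cor: qi nonnegative} with the combined term $\max_{l,i} s_{li} + \max\{f(n-1),f(1-n)\}$. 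Verifying that this uniform bound indeed dominates every pair $(l,i)$ is the only step requiring care; the remaining manipulations are routine and parallel those of Corollary \ref{cor: qi nonnegative}.
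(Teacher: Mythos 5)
Your proposal is correct and follows essentially the same route as the paper's own proof: bound $f_i(\eta_i(g))$ by $\max\{f(n-1),f(1-n)\}$ via convexity, bound $s_{li}$ by $\max_{l,i}s_{li}$, add the nonnegative sum $\sum_{j\neq i}[s_{lj}+f_j(\eta_j(g))]$, divide by $n+1$, invoke Lemma \ref{lem: dif}, and multiply by two. Your explicit treatment of the node intercepts via $\alpha \leq \alpha_l$ is in fact slightly more careful than the paper, which tacitly takes a uniform intercept $\alpha_l = \alpha$ throughout; otherwise the two arguments coincide step for step.
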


\begin{proof}
Since $n \geq 2$ and $f$ is convex and has a minimum at $0$, this implies that $\frac{n-1}{n+1} [f(1)-f(0)]$ and $\frac{n-1}{n+1} [f(-1)-f(0)]$ are non-negative.  If (\ref{eq: ineq 1 new}) and  (\ref{eq: ineq 2 new}) hold, then $d_{li}(g)$ is non-negative and hence it suffices to only show that (\ref{eq: ineq 1 new}) and  (\ref{eq: ineq 2 new}) are implied by (\ref{eq: dli nonnegative new}).

For all $i$, function $f_{i}$ is a convex function of the degree of node $i$ in the graph $g$; the degree of node $i$ must take an integer value between $0$ and $n-1$, which due to the convexity of $f_{i}$ and the fact that $k_{i} \in \{0,1,\ldots n-1\}$ implies that the maximum of $f_{i}$ is equivalent to $\max \{f_{i}(0),f_{i}(n-1) \}$ and is less than $\max\{f(n-1),f(-n+1)\}$.  That is,
\begin{equation*}
f_{i}(\eta_{i}(g)) \leq \max \{f_{i}(\eta_{i}(g)) \}=\max \{f_{i}(0),f_{i}(n-1) \} \leq \max\{f(n-1),f(-n+1)\} \quad \forall i \in N
\end{equation*}
Further, $s_{li} \leq \max_{l \in V,i \in N}s_{li}$.  This means that (\ref{eq: dli nonnegative new}) implies:
\begin{equation}
\label{eq: ineq 3 new}
   \alpha - \gamma_{0}-n \left[s_{li}+f_{i}(\eta_{i}(g)) \right]-\frac{1}{2}(n-1)\max \{f(1)-f(0),f(-1)-f(0) \} > 0 \quad \forall i \in N, l \in V
\end{equation}
Since, all $f_{i}(\eta_{i}(g)) \geq 0$ and all $s_{li}\geq 0$, we may add $\sum_{j \neq i}[s_{lj}+f_{j}(\eta_{j}(g))]$ to the left side of  (\ref{eq: ineq 3 new}) without changing the inequality:
\begin{multline}
\label{eq: ineq 4 new}
\alpha - \gamma_{0}-n \left[s_{li}+f_{i}(\eta_{i}(g)) \right]+\sum_{j \neq i}[s_{lj}+f_{j}(\eta_{j}(g))]\\
-\frac{1}{2}(n-1)\max \{f(1)-f(0),f(-1)-f(0) \} > 0 \quad \forall i \in N, l \in V
\end{multline}
Dividing by $n+1$ yields:
\begin{multline}
\label{eq: ineq 5}
\frac{\alpha - \gamma_{0}-n \left[s_{li}+f_{i}(\eta_{i}(g)) \right]+\sum_{j \neq i}[s_{lj}+f_{j}(\eta_{j}(g))]}{n+1} \\
-\frac{1}{2}\frac{n-1}{n+1}\max \{f(1)-f(0),f(-1)-f(0) \} > 0 \quad \forall i \in N, l \in V
\end{multline}
From Lemma \ref{lem: dif} this simplifies to:
\begin{equation}
\label{eq: ineq 6}
d_{li}(g)-\frac{1}{2}\frac{n-1}{n+1}\max \{f(1)-f(0),f(-1)-f(0)\} > 0 \quad \forall i \in N, l \in V
\end{equation}
Multiplying through by two yields:
\begin{align*}
2d_{li}(g)>\frac{n-1}{n+1}\max \{f(1)-f(0),f(-1)-f(0)\} &>\frac{n-1}{n+1}[f(1)-f(0)]  \quad \forall i \in N, l \in V\\
2d_{li}(g)>\frac{n-1}{n+1}\max\{f(1)-f(0),f(-1)-f(0)\} &>\frac{n-1}{n+1}[f(-1)-f(0)]  \quad \forall i \in N, l \in V
\end{align*}
Now (\ref{eq: ineq 1 new}) and  (\ref{eq: ineq 2 new}) immediately follow.
\end{proof}


\begin{remark}
It should be noted that this bound will often not be tight and hence demand quantities may be positive even when it is not met.
\end{remark}

Suppose that $f$ is a convex function that has a minimum at $0$.  Further, suppose $f_{i}(\eta_{i}(g))=f(\eta_{i}(g)-k_{i})$.  Define the change in $f$ as $\triangle^{-} f_{i}(k_{i})=f_{i}(k_{i}-1)-f_{i}(k_{i})=f(-1)-f(0)=\triangle^{-} f(0)$ and $\triangle^{+} f_{i}(k_{i})=f_{i}(k_{i}+1)-f_{i}(k_{i})=f(1)-f(0)=\triangle^{+} f(0)$.  Suppose $n \geq 2$ firms compete in an oligopoly with market demand $P=\alpha - \sum_{i \in N}q_{i}$ and marginal costs $c_i(g) = \gamma_0 + f_i(\eta_i(g))$.

If the parameters $\alpha$ and $\gamma_{0}$ and the functions $f_i$ obey condition (\ref{eq: qi nonnegative}), then the equivalence class of graphs $[g]_{\eta}$ such that $\eta_{i}(g)=k_{i}$ is an equivalence class of stable collaboration graphs.

The induced price at node $l$ is denoted as $P_{l}(D_{l})=P_{l}(d_{l1},d_{l2},\ldots,d_{ln})$.  The marginal production cost is $c_{i}(q_{i}|g)=f_{i}(q_{i},\eta_{i}(g))$ as before in (\ref{eq: marginal cost}) but is now denoted as $c_{i}(\sum_{l \in V}{d_{li}}|g)=f_{i}(\sum_{l \in V}{d_{li}},\eta_{i}(g))$.  However, now there is an additional marginal cost to ship a unit of quantity to node $l$ for firm $i$ denoted as $s_{li}$

Suppose we have an oligopoly consisting of $n$ firms in which collaboration is defined by the graph $g$, the demand function at node $l$ is $P_{l}(d_{l1},d_{l2},\ldots,d_{ln}) = \alpha_l - \sum_{i \in N}{d_{li}}$, and the profit function (allocation rule) for Firm $i$ in that oligopoly is given by:

\begin{thm}
\label{thm: stable assym k distn graph}
Suppose that $f$ is a nonnegative ($f (\eta) \geq 0$) convex function that has a minimum at $0$.  Further, suppose $f_{i}(\eta_{i}(g))=f(\eta_{i}(g)-k_{i})$.  Define the change in $f$ as $\triangle^{-} f_{i}(k_{i})=f_{i}(k_{i}-1)-f_{i}(k_{i})=\triangle^{-} f(0)=f(-1)-f(0)$ and $\triangle^{+} f_{i}(k_{i})=f_{i}(k_{i}+1)-f_{i}(k_{i})=\triangle^{+} f(0)=f(1)-f(0)$.  Suppose $n \geq 2$ firms compete in an oligopoly with market demand $P_{l}(d_{l1},d_{l2},\ldots,d_{ln}) = \alpha_l - \sum_{i \in N}{d_{li}}$, marginal production cost of $c_{i}(g)=\gamma_0+f_{i}(\eta_{i}(g))$, and marginal shipping cost of $s_{li}$.  If the parameters $\alpha$, $\gamma_{0}$, and $s$ as well as the function $f$ obey condition (\ref{eq: dli nonnegative new}), then the equivalence class of graphs $[g]_{\eta}$ such that $\eta_{i}(g)=k_{i}$ is an equivalence class of stable collaboration graphs.
\end{thm}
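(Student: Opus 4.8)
The plan is to run the argument of the preceding (aspatial) stability theorem independently at each transport node and then sum. The crucial observation, already recorded in the remark following (\ref{eqn:MarginalCostSpatial}), is that the spatial profit decouples across nodes: writing $Y_{li}(g) = d_{li}(g)\bigl[P_{l}(g) - s_{li} - c_{i}(g)\bigr]$ for the profit firm $i$ earns at node $l$, we have $Y_{i}(g) = \sum_{l \in V} Y_{li}(g)$. Since the shipping costs $s_{li}$ are independent of the collaboration graph, adding or deleting a link changes only the degree-dependent term $f_{i}$, so at each fixed node $l$ we face exactly the single-market Cournot situation governed by Lemma \ref{lem: dif} and Corollary \ref{cor: dli nonnegative new}.

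First I would fix a representative $g \in [g]_{\eta}$ with $\eta_{i}(g) = k_{i}$ for all $i$; because every graph in the class has the same degree sequence and the shipping costs are unchanged, it suffices to show this $g$ is pairwise stable. Consider firm $i$ deleting the link $ij$, so that $\eta_{i}(g-ij) = k_{i} - 1$, $\eta_{j}(g-ij) = k_{j} - 1$, and all remaining degrees are fixed. Applying Lemma \ref{lem: dif} at each node $l$, I would express $d_{li}(g-ij)$, $d_{lj}(g-ij)$, and $d_{lr}(g-ij)$ in terms of their values on $g$. The $s_{li}$ terms cancel in these differences, so the resulting expressions are formally identical to those in the aspatial proof with $q_{i}$ replaced by $d_{li}$ and with $\triangle^{-} f$ playing the role of the aspatial increment.

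Next, for each node $l$ I would compute $Y_{li}(g-ij)$ in terms of $Y_{li}(g)$, obtaining the per-node analogue of (\ref{eq: profit change 3A}),
\begin{equation*}
Y_{li}(g-ij) - Y_{li}(g) = -\,\triangle^{-} f_{i}(k_{i})\left(\frac{n-1}{n+1}\right)\left(2\,d_{li}(g) - \frac{n-1}{n+1}\,\triangle^{-} f_{i}(k_{i})\right),
\end{equation*}
where I use the structural identity $\triangle^{-} f_{i}(k_{i}) = \triangle^{-} f_{j}(k_{j}) = f(-1) - f(0)$ to collapse the cross terms exactly as in the aspatial case. By Corollary \ref{cor: dli nonnegative new}, condition (\ref{eq: dli nonnegative new}) forces $2\,d_{li}(g) - \frac{n-1}{n+1}\triangle^{-} f_{i}(k_{i}) > 0$ at every node, while convexity of $f$ with its minimum at $0$ gives $\triangle^{-} f_{i}(k_{i}) \geq 0$; hence every summand is non-positive and $Y_{i}(g-ij) - Y_{i}(g) = \sum_{l \in V} [Y_{li}(g-ij) - Y_{li}(g)] \leq 0$. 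This establishes part (1) of the stability definition for $i$, and by symmetry for $j$. The identical computation with $g+ij$, the increment $\triangle^{+} f$, and the second inequality of Corollary \ref{cor: dli nonnegative new} shows $Y_{i}(g+ij) - Y_{i}(g) \leq 0$, so no firm can strictly profit by adding a link and part (2) holds vacuously.

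The main obstacle I anticipate is organizational rather than conceptual: I must confirm that the shipping terms genuinely drop out of every difference (they do, being graph-independent constants) and that the per-node profit identity reduces exactly to the aspatial form, so that the node-by-node inequalities of Corollary \ref{cor: dli nonnegative new} can be invoked uniformly over all $l \in V$ before the final summation. Once the decoupling is justified, the proof is simply $|V|$ parallel copies of the aspatial argument, and the sum of non-positive terms delivers stability of every graph in the class $[g]_{\eta}$.
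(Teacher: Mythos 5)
Your proposal is correct and follows essentially the same route as the paper's own proof: the paper likewise defines the per-node profit $y_{li}(g)=d_{li}(g)[P_{l}(g)-c_{i}(g)-s_{li}]$, runs the aspatial computation at each transport node using Lemma \ref{lem: dif} and the identity $\triangle^{\pm} f_{i}(k_{i})=\triangle^{\pm} f(0)$, signs the factor $2d_{li}(g)-\frac{n-1}{n+1}\triangle^{\pm} f_{i}(k_{i})$ via Corollary \ref{cor: dli nonnegative new}, and sums over $l\in V$. If anything, your use of weak inequalities is slightly more careful than the paper's strict ones (which silently assume $\triangle^{\pm} f(0)>0$), and weak inequalities indeed suffice for pairwise stability as stated in Definition \ref{definition: stability}.
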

\begin{proof}
Let $g$ be a graph in the equivalence class of graphs $[g]_{\eta}$, that is, $g$ has a degree sequence such that $\eta_{i}(g)=k_{i}$ for all firms $i$.  Consider a firm $i$ who may consider dropping its link with node $j$.  If node $i$ drops its link with node $j$ leading to graph $g-ij$, then $\eta_{i}(g-ij)=k_{i}-1$ and $\eta_{j}(g-ij)=k_{j}-1$, while $\eta_{r}(g-ij)=k_{r}$ for $r \not \in \{i,j\}$.  
Using Lemma \ref{lem: dif}
\begin{equation}
d_{li}=\frac{\alpha_l-\gamma_0 -n(s_{li}+ f_i(\eta_i(g)) ) + \sum_{j \neq i} [s_{lj}+ f_j(\eta_j(g))  ]    }{n+1}
\end{equation}
Calculate:
\begin{align*}
d_{li}(g-ij) &= d_{li}(g)- \triangle^{-} f_{i}(k_{i}) \left( \frac{n}{n+1} \right)+ \triangle^{-} f_{j}(k_{j}) \left( \frac{1}{n+1}  \right)\\
d_{lj}(g-ij) &= d_{lj}(g)- \triangle^{-} f_{j}(k_{j}) \left( \frac{n}{n+1} \right)+ \triangle^{-} f_{i}(k_{i}) \left( \frac{1}{n+1}  \right)\\
d_{lr}(g-ij) &= d_{lr}(g)+ \triangle^{-} f_{i}(k_{i}) \left( \frac{1}{n+1} \right)+ \triangle^{-} f_{j} (k_{j})\left( \frac{1}{n+1}  \right)
\end{align*}
It then follows that
\begin{align*}
D_{l}(g-ij)&=D_{l}(g)- \left( \frac{1}{n+1} \right) (\triangle^{-} f_{i}(k_{i})+\triangle^{-} f_{j}(k_{j}))\\
P_{l}(g-ij)&=P_{l}(g)+\left( \frac{1}{n+1} \right)(\triangle^{-} f_{i}(k_{i})+\triangle^{-} f_{j}(k_{j}))  \\
c_{i}(g-ij)&=c_{i}(g)+\triangle^{-} f_{i}(k_{i})
\end{align*}
Define $Y_{i}(g)=\sum_{l \in V}{y_{li}(g)}$ where $y_{li}(g)=d_{li}(g)[P_{l}(g)-c_{i}(g)-s_{li}]$.  Now, we can calculate $y_{li}(g-ij)$ in terms of $y_{li}(g)$:
\begin{align*}
y_{li}(g-ij)&=d_{li}(g-ij)[P_{l}(g-ij)-c_{i}(g-ij)-s_{li}] \\
             &= y_{li}(g)+ d_{li}(g) \left( \frac{2}{n+1} \right) [  \triangle^{-} f_{j}(k_{j}) - n \triangle^{-} f_{i}(k_{i})]
             +\left( \frac{[  \triangle^{-} f_{j}(k_{j}) - n \triangle^{-} f_{i}(k_{i})]}{n+1} \right)^2
\end{align*}
Since $f_{i}(\eta_{i}(g))=f(\eta_{i}(g)-k_{i})$ this implies that $\triangle^{-} f_{i}(k_{i})=\triangle^{-} f_{j}(k_{j})$ yielding (\ref{eq: profit change 1}) and then (\ref{eq: profit change 2}) and (\ref{eq: profit change 3}) through algebraic manipulation.  Finally,  $\triangle^{-} f_{i}(k_{i})$ and $\frac{n-1}{n+1}$ are non-negative and by Corollary \ref{cor: dli nonnegative new}, the term $\left(2 d_{li}(g) -  \frac{n-1}{n+1} \triangle^{-} f_{i}(k_{i})  \right)$ is non-negative as well.  Hence, this implies (\ref{eq: profit change 4}).
\begin{align}
y_{li}(g-ij)-y_{li}(g)&=2 d_{li}(g)\triangle^{-} f_{i}(k_{i}) \left( \frac{1-n}{n+1} \right)+(\triangle^{-} f_{i}(k_{i}))^{2} \left( \frac{1-n}{n+1} \right)^2 \label{eq: profit change 1}\\
&=  \triangle^{-} f_{i}(k_{i}) \left(  \frac{1-n}{n+1}   \right) \left(2 d_{li}(g) + \frac{1-n}{n+1} \triangle^{-} f_{i}(k_{i})    \label{eq: profit change 2} \right)\\
&= - \triangle^{-} f_{i}(k_{i}) \left(  \frac{n-1}{n+1}   \right) \left(2 d_{li}(g) -  \frac{n-1}{n+1} \triangle^{-} f_{i}(k_{i})   \label{eq: profit change 3} \right)\\
&<0 \label{eq: profit change 4}
\end{align}
Since $y_{li}(g-ij)-y_{li}(g)<0$ for all $i$, we can sum over all transport nodes $l$, to see that this implies that node $i$ does not have an incentive to drop a link.
\begin{align*}
y_{li}(g-ij)-y_{li}(g)&<0\\
\sum_{l}y_{li}(g-ij)-\sum_{l} y_{li}(g)&<0\\
Y_{i}(g-ij)-Y_{i}(g)&<0
\end{align*}
This implies that if firm $i$ attempts to drop link $ij$, then $Y_{i}(g) > Y_{i}(g-ij)$ and thus firm $i$ decreases its profit.  The same will be true for firm $j$.  Hence, no firm has an incentive to drop a link from graph $g$.
Now, we will consider the case where firm $i$ attempts to add a link to the graph $g$, giving $g+ij$ under the assumption that the link $ij$ does not exist in graph $g$.  This analysis will follow closely the analysis for $g-ij$.  First note that $\eta_{i}(g)=k_{i}$ for all firms $i$ and $\eta_{i}(g+ij)=k_{i}+1$ and $\eta_{j}(g+ij)=k_{j}+1$, while $\eta_{r}(g+ij)=k_{r}$ for $r \not \in \{i,j\}$.  We define $\triangle^{+} f_{i}(k_{i})$ as $\triangle^{+} f_{i}(k_{i})=f_{i}(k+1)-f_{i}(k)$; note the subtle difference from the definition of $\triangle^{-} f_{i}(k_{i})$.  Again using Lemma \ref{lem: qi}, we calculate the production quantity for each node in graph $g+ij$:
\begin{align*}
d_{li}(g+ij) &= d_{li}(g)- \triangle^{+} f_{i}(k_{i}) \left( \frac{n}{n+1} \right)+ \triangle^{+} f_{j}(k_{j}) \left( \frac{1}{n+1}  \right)\\
d_{lj}(g+ij) &= d_{lj}(g)- \triangle^{+} f_{j}(k_{j}) \left( \frac{n}{n+1} \right)+ \triangle^{+} f_{i}(k_{i}) \left( \frac{1}{n+1}  \right)\\
d_{lr}(g+ij) &= d_{lr}(g)+ \triangle^{+} f_{i}(k_{i}) \left( \frac{1}{n+1} \right)+ \triangle^{+} f_{j}(k_{j}) \left( \frac{1}{n+1}  \right)
\end{align*}
We can then calculate the corresponding total production quantity $Q$, the market price $P$ and marginal costs for each player for the graph $g+ij$:
\begin{align*}
D_{l}(g+ij)&=D_{l}(g)- \left( \frac{1}{n+1} \right) (\triangle^{+} f_{i}(k_{i})+\triangle^{+} f_{j}(k_{j}))\\
P_{l}(g+ij)&=P_{l}(g)+\left( \frac{1}{n+1} \right)(\triangle^{+} f_{i}(k_{i})+\triangle^{+} f_{j}(k_{j}))  \\
c_{i}(g+ij)&=c_{i}(g)+\triangle^{+} f_{i}(k_{i})
\end{align*}
Now, we can calculate $y_{li}(g+ij)$ in terms of $y_{li}(g)$:
\begin{align*}
y_{li}(g+ij)&=d_{li}(g+ij)[P_{l}(g+ij)-c_{i}(g+ij)-s_{li}] \\
             &= y_{li}(g)+ d_{li}(g) \left( \frac{2}{n+1} \right) [  \triangle^{+} f_{j}(k_{j}) - n \triangle^{+} f_{i}(k_{i})]
             +\left( \frac{[  \triangle^{+} f_{j}(k_{j}) - n \triangle^{+} f_{i}(k_{i})]}{n+1} \right)^2
\end{align*}
Since $f_{i}(\eta_{i}(g))=f(\eta_{i}(g)-k_{i})$ this implies that $\triangle^{+} f_{i}(k_{i})=\triangle^{+} f_{j}(k_{j})$ yielding (\ref{eq: profit change 5b}) and then (\ref{eq: profit change 6b}) and (\ref{eq: profit change 7b}) through algebraic manipulation.  Finally,  $\triangle^{+} f_{i}(k_{i})$ and $\frac{n-1}{n+1}$ are non-negative and by Corollary \ref{cor: dli nonnegative new}, the term $\left(2 d_{li}(g) -  \frac{n-1}{n+1} \triangle^{+} f_{i}(k_{i})  \right)$ is non-negative as well.  Hence, this implies (\ref{eq: profit change 8b}).
\begin{align}
y_{li}(g+ij)-y_{li}(g)&=2 d_{li}(g)\triangle^{+} f_{i}(k_{i}) \left( \frac{1-n}{n+1} \right)+(\triangle^{+} f_{i}(k_{i}))^{2} \left( \frac{1-n}{n+1} \right)^2 \label{eq: profit change 5b}\\
&=  \triangle^{+} f_{i}(k_{i}) \left(  \frac{1-n}{n+1}   \right) \left(2 d_{li}(g) + \frac{1-n}{n+1} \triangle^{+} f_{i}(k_{i})    \label{eq: profit change 6b} \right)\\
&= - \triangle^{+} f_{i}(k_{i}) \left(  \frac{n-1}{n+1}   \right) \left(2 d_{li}(g) -  \frac{n-1}{n+1} \triangle^{+} f_{i}(k_{i})   \label{eq: profit change 7b} \right)\\
&<0 \label{eq: profit change 8b}
\end{align}
Since $y_{li}(g+ij)-y_{li}(g)<0$ for all $i$, we can sum over all transport nodes $l$, to see that this implies that node $i$ does not have an incentive to drop a link.
\begin{align*}
y_{li}(g+ij)-y_{li}(g)&<0\\
\sum_{l}y_{li}(g+ij)-\sum_{l} y_{li}(g)&<0\\
Y_{i}(g+ij)-Y_{i}(g)&<0
\end{align*}
This implies that if firm $i$ attempts to add a link $ij$, then $Y_{i}(g) > Y_{i}(g+ij)$ and the firm decreases its profit.  The same will be true for firm $j$.  Hence, no firm has an incentive to add a link to graph $g$. Since no firm has an incentive to add or drop a link to graph $g$, it is stable. This completes the proof.
\end{proof}

\section{Numerical Example}
\label{ex: assymetric oligopoly}
We present a numerical example of Theorem \ref{thm: stable assym k distn graph}. Let $N=5$ firms compete in an oligopoly with inverse demand function $P_{l}=103-\sum_{i}d_{li}$, fixed cost $\gamma_{0}=5$, shipping costs $s_{li}=1 \; \forall \; li$, and $f_{i}(\eta_{i}(g))=(\eta_{i}(g) - k_{i})^{2}+\psi$ where $\mathbf{k}=[2,3,4,3,2]^T$ and $\psi=2$.  We want to test the stability of a graph $g$ with $\eta_{i}(g)=k_{i}$ and $f_{i}(\eta_{i}(g))=(\eta_i(g)-k_{i})^{2}+\psi$ for each node $i$. Note that $f(\eta_{i}(g))=(\eta_{i}(g))^{2}+\psi$.  The following calculations will be need:

\begin{center}
\begin{tabular}{|c|c|c|c|c|}
  \hline
$\alpha$ & 103\\
\hline
$\gamma_{0}$ & 5 \\
\hline
$n$ & 5 \\
\hline
$\max_{l \in V,i \in N}s_{li}$ & 1\\
\hline
$f(n-1)=f(4)=4^2 + 2$ & 18\\
  \hline
$f(1-n)=f(-4)=(-4)^2+2$ & 18\\
  \hline
$f(1)=1^2+2$ & 3\\
  \hline
$f(-1)=(-1)^2+2$ & 3\\
  \hline
$f(0)=0^2+2$ & 2\\
  \hline
\end{tabular}
\end{center}

In order to invoke Corollary \ref{cor: dli nonnegative new}, we must ensure condition (\ref{eq: dli nonnegative new}) holds:
\begin{multline*}
\alpha - \gamma_{0}-n \left[ \max_{l \in V,i \in N}s_{li}+\max \{ f(n-1),f(1-n) \} \right]\\
-\frac{1}{2}(n-1)\max \{ f(1)-f(0),f(-1)-f(0) \} > 0
\end{multline*}
Plugging in the appropriate values:
\begin{align*}
   103-5-5 \cdot [1+\max(18,18) ]-\frac{1}{2}(4)\max \{ 3-2,3-2) \} &> 0 \\
   98-5 \cdot 19 -2 \cdot 1 &> 0 \\
   98-95 -2 &> 0 \\
   1 &> 0
\end{align*}

Condition (\ref{eq: dli nonnegative new}) is met for this set of parameters and function $f$.
\begin{figure}[ht]
\centering
\includegraphics[scale=0.40]{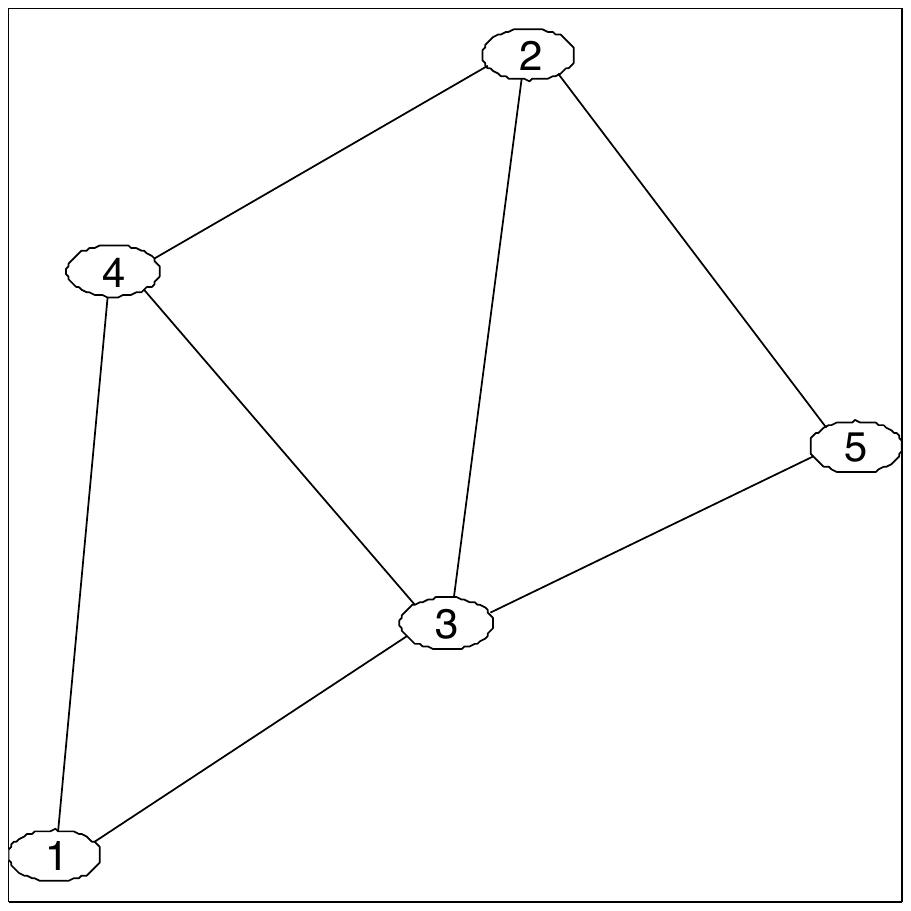}
\caption{Collaboration Network}
\label{fig:X1}
\end{figure}

\begin{figure}[ht]
\centering
\includegraphics[scale=0.40]{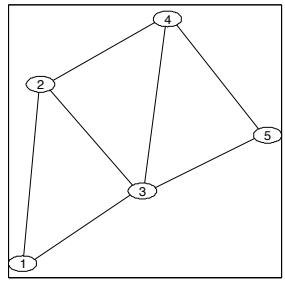}
\caption{Collaboration Network 2}
\label{fig:X1b}
\end{figure}


Two stable graphs, shown in Figure \ref{fig:X1} and Figure \ref{fig:X1b}, have a degree sequence equivalent to $\mathbf{k}$.



\section{Conclusion}
In this paper we bridge the gap between collaborative network models and spatial models by both extending the research in collaborative oligopoly network models \cite{goyal2003} and \cite{LichterGriffinFriesz2011}, by introducing the spatial transport network and by extending spatial oligopoly models \cite{Harker1986,dafermos1987,Miller1996,Nagurney2009}, and by introducing firm collaboration.  We have developed a generalized model using variational inequalities and shown in a subset of cases, we can analytically show that we may construct games that result in stable collaboration graphs with an arbitrary degree sequence.



\bibliographystyle{plain}
\bibliography{Biblio-Database2}
\end{document}